\DeclarePairedDelimiter{\ceil}{\lceil}{\rceil}
\newtheorem{thm}{Theorem}[section]
\newtheorem{lem}[thm]{Lemma}
\newtheorem{cor}[thm]{Corollary}
\newtheorem{conj}[thm]{Conjecture}
\theoremstyle{definition}
\newtheorem{definition}{Definition}
\newtheorem*{question}{Question} }
\newcommand{\wo}{\setminus}
\newcommand{\abs}[1]{\left|{#1}\right|}
\newcommand{\set}[1]{\left\{{#1}\right\}} 
\newcommand{\setof}[2]{\left\{{#1}\,:\,{#2}\right\}}
\newcommand{\of}{\subseteq}
\newcommand{\I}{\mathcal{I}}
\newcommand{\N}{\mathbb{N}}
\newcommand{\al}{q}
\newcommand{\Fmk}[1]{\mathcal{F}^{k,\al;m}_{#1}}
\title{Maximal-clique partitions and the Roller Coaster Conjecture}
\author{Jonathan Cutler}
\address[Jonathan Cutler]{Department of Mathematical Sciences, Montclair State University, Montclair, NJ 07043 USA}
\email{jonathan.cutler@montclair.edu}
\author{Luke Pebody}
\address[Luke Pebody]{London, UK}
\email{luke@pebody.org}
\begin{document}
\begin{abstract}
A graph $G$ is {\em well-covered} if every maximal independent set has the
same cardinality $\al$. Let $i_k(G)$ denote the number of independent sets of
cardinality $k$ in $G$. Brown, Dilcher, and Nowakowski conjectured that
the independence sequence $(i_0(G), i_1(G), \ldots, i_\al(G))$ was unimodal for
any well-ordered graph $G$ with independence number $\al$.  Michael and Traves disproved this conjecture. Instead they posited the so-called ``Roller Coaster" Conjecture: that the terms 
\[
	i_{\ceil{\frac\al2}}(G), i_{\ceil{\frac\al2}+1}(G), \ldots, i_\al(G)
\] 
could be in any specified order for some well-covered graph $G$
with independence number $\al$. Michael and Traves proved the conjecture for $\al<8$ and Matchett extended this to $\al<12$.

In this paper, we prove the Roller Coaster Conjecture using a construction of graphs with a property related to that of having a maximal-clique partition.  In particular, we show,
for all pairs of integers $1\le k<\al$ and positive integers $m$, that there is
a well-covered graph $G$ with independence number $\al$ for which every 
independent set of size $k+1$ is contained in a unique maximal independent set,
but each independent set of size $k$ is contained in at least $m$ distinct
independent sets.  
\end{abstract}

\maketitle
\section{Introduction}\label{S:introduction}

The behavior of the coefficients of the independence polynomial of graphs in various classes has produced many interesting problems.  For a graph $G$, we let $\I(G)$ be the set of independent sets in $G$, i.e., $\I(G)=\setof{I\of V(G)}{E(G[I])=\emptyset}$.  Also, let $\I_k(G)=\setof{I\in \I(G)}{\abs{I}=k}$ and $i_k(G)=\abs{\I_k(G)}$.  The \emph{independence number of $G$} is given by $\alpha(G)=\max \setof{k\in \N}{i_k(G)>0}$.  We let the \emph{independence polynomial of $G$} be the polynomial defined by
\[
	I(G;x)=\sum_{k=0}^{\alpha(G)} i_k(G)x^k.
\]
We refer to $(i_0(G),i_1(G),\ldots,i_{\alpha(G)}(G))$ as the \emph{independence sequence of $G$}.

Natural questions arise when one considers possible orderings of the coefficients of the independence sequence over various classes of graphs.  If one considers the class of all graphs, then Alavi, Erd\H{o}s, Malde, and Schwenk \cite{AEMS} proved that the coefficients can be ordered in any way apart from $i_0(G)=1$.  In particular, they proved the following.  Throughout the paper, we let $[n]=\set{1,2,\ldots,n}$.

\begin{thm}[Alavi, Erd\H{o}s, Malde, Schwenk \cite{AEMS}]
	Given a positive integer $\al$ and a permutation $\pi$ of $[\al]$, there is a graph $G$ with $\alpha(G)=\al$ such that
	\[
		i_{\pi(1)}(G)<i_{\pi(2)}(G)<\cdots<i_{\pi(\al)}(G).
	\]
\end{thm}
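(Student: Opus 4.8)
The plan is to prove, by induction on $\abs{S}$, the following stronger statement: for every nonempty finite set $S$ of positive integers and every bijection $\rho\colon[\abs{S}]\to S$, there is a graph $G$ with $\alpha(G)=\max S$ such that $i_{\rho(1)}(G)<i_{\rho(2)}(G)<\cdots<i_{\rho(\abs{S})}(G)$; the theorem is the case $S=[\al]$ and $\rho=\pi$. I would strengthen to arbitrary index sets $S$ (not merely initial segments $[\al]$) precisely so that the induction closes: once the index that is to carry the largest coefficient has been removed, the surviving indices form a non-consecutive set, and the original phrasing would not be available as a hypothesis.

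Next I would record the two constructions that do the work. First, for positive integers $P$ and $N$, the disjoint union $W$ of $P$ copies of $K_N$ satisfies $\alpha(W)=P$ and $I(W;x)=(1+Nx)^P$, so it contributes $\binom{P}{k}N^k$ to the coefficient of $x^k$ when $k\le P$ and nothing when $k>P$. Second, the join satisfies $I(H_1+H_2;x)=I(H_1;x)+I(H_2;x)-1$ and $\alpha(H_1+H_2)=\max\bigl(\alpha(H_1),\alpha(H_2)\bigr)$; in particular, if $J$ is built from $r$ disjoint copies of a graph $G'$ by adding every edge between distinct copies, then $\alpha(J)=\alpha(G')$ while $i_k(J)=r\,i_k(G')$ for all $k\ge1$. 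Thus joining copies of $G'$ scales its entire independence sequence by any positive integer $r$ while leaving both the independence number and the order of the coefficients untouched---this is the device for ``making room''.

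For the inductive step (the base case $\abs{S}=1$ being the edgeless graph on $\max S$ vertices), I would set $P:=\rho(\abs{S})$ and $S':=S\setminus\{P\}$, apply the inductive hypothesis to $S'$ with the inherited ordering to obtain $G'$ with $\alpha(G')=\max S'$ and $i_{\rho(1)}(G')<\cdots<i_{\rho(\abs{S}-1)}(G')$, and then take $G:=J+W$, where $J$ is the $r$-fold join of $G'$ and $W$ is the disjoint union of $P$ copies of $K_N$. Then $\alpha(G)=\max(\max S',P)=\max S$ and $i_k(G)=r\,i_k(G')+\binom{P}{k}N^k$ for $k\ge1$, so $W$ adds $N^P$ at index $P$, at most $PN^{P-1}$ at each index below $P$, and nothing above $P$, while $J$ keeps the coefficients indexed by $S'$ in the right order with consecutive gaps at least $r$. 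With $C:=\max_k i_k(G')$, I would choose $N$ large (in terms of $C$ and $P$) and then an integer $r$ with $PN^{P-1}<r<(N^P-PN^{P-1})/C$---the window is nonempty, of length at least $N^{P-1}(N-(C+1)P)/C$, once $N>(C+1)P$, so it contains an integer once $N$ is large enough. For such $r$ the term $r\,i_k(G')$ dominates the $O(N^{P-1})$ perturbation coming from $W$ at every $k\in S'$, so those coefficients keep their order, while the coefficient at index $P$, namely $r\,i_P(G')+N^P\ge N^P$, exceeds $r\,i_k(G')+PN^{P-1}$ for every $k\in S'$; this yields $i_{\rho(1)}(G)<\cdots<i_{\rho(\abs{S})}(G)$ and closes the induction. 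The hard part---indeed the only nontrivial point---is exactly this interference: no gadget can inflate the coefficient at index $P$ without leaking a growing amount into every lower coefficient (after all, $i_1$ is the number of vertices and bounds every $i_k$), so a bare booster would scramble the inductively built order; the real effort goes into arranging, as above, that $G'$ has first been inflated by the $r$-fold join so that its coefficients and their gaps outrun the booster's leakage, which is affordable only because $W$ is a union of exactly $P$ cliques---contributing nothing above index $P$ and merely $O(N^{P-1})$ below it, against a gain of $N^P$ at index $P$.
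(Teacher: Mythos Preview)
The paper does not prove this theorem; it is quoted as a known result of Alavi, Erd\H{o}s, Malde, and Schwenk and used only as background motivation, so there is no proof in the paper to compare against.

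Your argument is correct. Strengthening to arbitrary index sets $S$ so that the induction survives the removal of the designated ``largest'' index is exactly the right move, and the two gadgets do what you claim: the $r$-fold join of $G'$ scales every $i_k$ (and every gap) by $r$ without changing $\alpha$, while $W=PK_N$ has $i_P(W)=N^P$, $i_k(W)=0$ for $k>P$, and $i_k(W)=\binom{P}{k}N^k\le PN^{P-1}$ for $1\le k\le P-1$ once $N\ge P$ (the ratio of consecutive terms being $\tfrac{P-k}{k+1}N\ge N/P\ge1$). With $C=\max_k i_k(G')$, the window $PN^{P-1}<r<(N^P-PN^{P-1})/C$ has length $N^{P-1}(N-(C+1)P)/C$, hence contains an integer for $N$ large; any such $r$ makes the $S'$-indexed gaps (each at least $r$) absorb the $O(N^{P-1})$ perturbation, while $i_P(G)\ge N^P>rC+PN^{P-1}\ge i_k(G)$ for every $k\in S'$. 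This closes the induction cleanly.
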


A graph $G$ is said to be \emph{well-covered} if every maximal independent set in $G$ has the same size.  Brown, Dilcher, and Nowakowski \cite{BDN} conjectured that the independence sequence of any well-covered graph is unimodal.  This conjecture was disproved by Michael and Traves \cite{MT}.  However, they were able to show the following.

\begin{thm}[Michael, Traves~\cite{MT}]\label{T:mt}
The independence sequence of a well-covered 
graph $G$ with $\alpha(G)=\al$ satisfies
	\[
	\frac{i_0(G)}{\binom{\al}0}\le\frac{i_1(G)}{\binom{\al}1}\le\ldots\le\frac{i_{\al}(G)}{\binom{\al}{\al}}.
	\]
\end{thm}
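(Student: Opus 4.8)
The plan is to establish the chain one link at a time. Since $\binom{\al}{k}$ and $\binom{\al}{k+1}$ are both positive for $0\le k<\al$ and $\binom{\al}{k+1}\big/\binom{\al}{k}=(\al-k)/(k+1)$, the inequality $i_k(G)/\binom{\al}{k}\le i_{k+1}(G)/\binom{\al}{k+1}$ is equivalent to
\[
	(\al-k)\,i_k(G)\le (k+1)\,i_{k+1}(G),
\]
and proving this for every $k\in\set{0,1,\ldots,\al-1}$ gives the stated string of ratios by transitivity. So it suffices to prove the displayed inequality.

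First I would double count the set of pairs
\[
	P=\setof{(I,J)}{I\in\I_k(G),\ J\in\I_{k+1}(G),\ I\of J}.
\]
Counting by the second coordinate is immediate: every $J\in\I_{k+1}(G)$ has exactly $k+1$ subsets of size $k$, each of which is automatically independent, so $\abs P=(k+1)\,i_{k+1}(G)$. Counting by the first coordinate is where well-coveredness enters. Fix $I\in\I_k(G)$. Since $k<\al$ and every maximal independent set of $G$ has size exactly $\al$, the set $I$ is not maximal; extend it greedily to a maximal independent set $M$, so $\abs M=\al$. For each $v\in M\wo I$ the set $I\disj\set v$ lies in $M$, hence is independent, and has size $k+1$; distinct vertices $v$ yield distinct sets $I\disj\set v$. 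Therefore $I$ is the first coordinate of at least $\abs{M\wo I}=\al-k$ pairs of $P$, and summing over $I\in\I_k(G)$ gives $\abs P\ge(\al-k)\,i_k(G)$. Comparing the two expressions for $\abs P$ proves the inequality.

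I do not foresee a genuine obstacle: the only real content is the elementary observation that, in a well-covered graph, every independent set of size below $\al$ extends to one of size exactly $\al$, and this is precisely what supplies the lower bound $\al-k$ in the second count. The points that want a little care are merely the arithmetic of the binomial ratio (legitimate exactly because every coefficient in play is nonzero for $0\le k<\al$) and the degenerate case $k=0$, where $i_0(G)=1$ and the inequality simply records that $G$ has at least $\al$ vertices.
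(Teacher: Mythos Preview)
Your argument is correct: the double count of pairs $(I,J)$ with $I\in\I_k(G)$, $J\in\I_{k+1}(G)$, $I\subseteq J$ gives exactly $(k+1)\,i_{k+1}(G)$ on one side and at least $(\al-k)\,i_k(G)$ on the other, the lower bound coming from the fact that every $I\in\I_k(G)$ sits inside some maximal independent set of size $\al$. This is the standard proof, and indeed essentially the one Michael and Traves give in~\cite{MT}.

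Note, however, that the present paper does not supply its own proof of Theorem~\ref{T:mt}; it merely quotes the result and attributes it to~\cite{MT}. So there is nothing in the paper to compare your argument against beyond the citation itself.
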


This implies the following.

\begin{cor}[Michael, Traves \cite{MT}]
	If $G$ is a well-covered graph with $\alpha(G)=\al$, then
	\[
		i_0(G)<i_1(G)<\cdots<i_{\ceil{\al/2}}(G).
	\]
\end{cor}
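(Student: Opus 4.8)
The plan is to deduce the corollary directly from Theorem~\ref{T:mt}. For $0\le k\le\al$ write $r_k=i_k(G)/\binom{\al}{k}$, so that Theorem~\ref{T:mt} is exactly the assertion that $r_0\le r_1\le\cdots\le r_\al$, while by definition $i_k(G)=\binom{\al}{k}\,r_k$. The idea is then to multiply the nondecreasing sequence $(r_k)$ by the binomial coefficients $\binom{\al}{k}$, which are \emph{strictly} increasing on the initial segment below the midpoint; the product should therefore be strictly increasing there, which is precisely the claim.

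First I would record that $r_k>0$ for all $k\le\al$. Indeed, since $G$ is well-covered with $\alpha(G)=\al$, it has a maximal independent set $M$ with $\abs{M}=\al$, and every $k$-element subset of $M$ is independent, whence $i_k(G)\ge\binom{\al}{k}>0$ and so $r_k>0$. Next, for each $k$ with $0\le k<\ceil{\al/2}$ I would estimate
\[
	i_{k+1}(G)=\binom{\al}{k+1}\,r_{k+1}\ \ge\ \binom{\al}{k+1}\,r_k\ \ge\ \binom{\al}{k}\,r_k\ =\ i_k(G),
\]
where the first inequality is the monotonicity of $(r_k)$ from Theorem~\ref{T:mt} and the second holds because $\binom{\al}{k}\le\binom{\al}{k+1}$ whenever $k+1\le\al-k$. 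Since $\binom{\al}{k}<\binom{\al}{k+1}$ \emph{strictly} for $k$ strictly below $\al/2$, and $r_k>0$, the second inequality is strict for those $k$; chaining over $k=0,1,\dots,\ceil{\al/2}-1$ then yields $i_0(G)<i_1(G)<\cdots<i_{\ceil{\al/2}}(G)$.

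The step I expect to be the genuine obstacle is the final one, $i_{\ceil{\al/2}-1}(G)<i_{\ceil{\al/2}}(G)$, when $\al$ is odd: in that case $\binom{\al}{\ceil{\al/2}-1}=\binom{\al}{\ceil{\al/2}}$, so the chain above delivers only $\le$ at that position, and strictness there is not a formal consequence of Theorem~\ref{T:mt} alone (it fails, for instance, for the edgeless graph on three vertices, where $i_1=i_2=3$). Handling this last step therefore requires either a nondegeneracy hypothesis or an argument exploiting the structure of well-covered graphs beyond the ratio bound; that is the one place where I would focus the remaining work, everything else being immediate from Theorem~\ref{T:mt}.
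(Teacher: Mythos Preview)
The paper gives no argument beyond the sentence ``This implies the following''; your deduction via the ratios $r_k=i_k(G)/\binom{\al}{k}$ is exactly the intended one, and it establishes the strict inequalities $i_k(G)<i_{k+1}(G)$ for all $k$ with $k<\al/2$.

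Your worry about the final step when $\al$ is odd is not a gap in your reasoning but a defect in the printed statement: the edgeless graph on three vertices is well-covered with $\al=3$ and has $i_1=i_2=3$, so the claimed inequality $i_{\lceil\al/2\rceil-1}(G)<i_{\lceil\al/2\rceil}(G)$ genuinely fails there. No additional structural input about well-covered graphs can repair this, since the counterexample is valid. The corollary should end at $i_{\lfloor\al/2\rfloor}(G)$ (equivalently, relax the last inequality to $\le$ when $\al$ is odd); with that correction your proof is complete and is precisely what the paper intends.
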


In addition, Michael and Traves conjectured that the second half of the independence sequence can be ``any-ordered''.  To be precise, they conjectured the following, which has become known as the Roller Coaster Conjecture.

\begin{conj}[Michael, Traves \cite{MT}; Roller Coaster Conjecture]\label{C:rollercoaster}
	Given a positive integer $\al$ and a permutation $\pi$ of $\set{\ceil{\al/2},\ceil{\al/2}+1,\ldots,\al}$, there is a well-covered graph $G$ with $\alpha(G)=\al$ and
	\[
		i_{\pi(\ceil{\al/2})}(G)<i_{\pi(\ceil{\al/2}+1)}(G)<\cdots<i_{\pi(\al)}(G).
	\]
\end{conj}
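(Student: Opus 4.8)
The target statement is: for all integers $1\le k<\al$ and positive integers $m$, there is a well-covered graph $G$ with $\alpha(G)=\al$ in which every independent set of size $k+1$ lies in a unique maximal independent set, yet every independent set of size $k$ lies in at least $m$ distinct maximal independent sets. Let me think about how to prove this and how it connects to the Roller Coaster Conjecture.

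First, the connection. The hypothesis "every independent set of size $k+1$ is in a unique maximal independent set" is a strong local-rigidity condition. If $G$ is well-covered with independence number $\al$, then for $j\ge k+1$, each independent set of size $j$ extends uniquely to a maximal (= size-$\al$) independent set, so counting pairs $(I, M)$ with $|I|=j$, $I\subseteq M$, $M$ maximal, gives $i_j(G) = \binom{\al}{j} i_\al(G)$ — the top portion of the sequence is forced to be a binomial multiple. Meanwhile the size-$k$ count is inflated: $i_k(G)\ge \tfrac{m}{\al-k+1}\binom{\al}{k}i_\al(G)$ or similar, so by choosing $m$ large we can make $i_k(G)$ as large as we like relative to $i_{k+1}(G),\dots,i_\al(G)$. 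To get an arbitrary permutation $\pi$ of $\{\ceil{\al/2},\dots,\al\}$, one takes a disjoint union (which multiplies independence polynomials) of several such graphs with different choices of $k$ in $\{\ceil{\al/2},\dots,\al-1\}$ and carefully chosen $m$'s, bootstrapping from the largest index down; this realization argument is routine linear algebra once the construction theorem is in hand, so I'd state it as a corollary.

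Now the construction itself. The natural idea is to build $G$ from its maximal independent sets directly: I want a family $\mathcal{M}$ of $\al$-subsets of a vertex set $V$ such that (a) every $k$-subset of some member of $\mathcal{M}$ lies in at least $m$ members, (b) every $(k+1)$-subset of some member lies in exactly one member, and (c) there is a graph $G$ on $V$ whose maximal independent sets are precisely the members of $\mathcal{M}$ — equivalently, $\mathcal{M}$ is an antichain that is "realizable" as the maximal-independent-set family of a graph, which (by taking $G$ to be the complement of the graph whose cliques... ) amounts to: the sets not contained in any member of $\mathcal{M}$ generate, via minimal such sets, an edge set $E$ for which $\mathcal{M}$ is exactly the set of maximal $E$-independent sets. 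The clean sufficient condition is that $\mathcal{M}$ is the set of maximal faces of the independence complex generated by declaring a pair $uv$ an edge iff $\{u,v\}$ lies in no member of $\mathcal{M}$ and no member of $\mathcal{M}$ is then lost. This is where the maximal-clique-partition language in the abstract enters: one designs $V$ as a disjoint union of "blocks," puts a structure on each block so that picking one vertex from each block (with compatibility constraints) yields the members of $\mathcal{M}$, and the $(k+1)$-uniqueness is engineered by a combinatorial design — e.g., an affine/projective-plane-like incidence structure or a Latin-square/MDS-code condition ensuring that any $k+1$ coordinates determine the codeword while any $k$ coordinates leave $\ge m$ completions.

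Concretely, I would take $\mathcal{M}$ to be (a modification of) the set of codewords of a linear MDS code of dimension $k+1$ and minimum distance large, over an alphabet of size $\ge m$: coordinates $=$ blocks, symbols in each block $=$ alphabet, each codeword a transversal. Any $k+1$ coordinates determine a unique codeword (MDS property $\Rightarrow$ the $(k+1)$-uniqueness), and any $k$ coordinates extend to exactly $q\ge m$ codewords. One then defines $G$ by making two vertices adjacent iff they lie in different blocks but in no common codeword (and possibly adding a clique inside each block, which is automatic if codewords are transversals), checks that the maximal independent sets of $G$ are exactly the transversals that are codewords — this requires that no transversal outside the code is $E$-independent, which the distance/MDS structure should give — and verifies well-coveredness ($\alpha=$ number of blocks $=k+1$?). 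That last point shows a problem: a code of dimension $k+1$ has codewords of length $n$, giving $\alpha=n$, not $k+1$, so the roles of $k$ and $\al$ decouple correctly: use length $\al$, dimension $k+1$; MDS requires alphabet size $\ge \al-1$, which may conflict with wanting arbitrary $m$ — so I'd use a code that is MDS-like only in the needed ranges, or pad with "free" blocks, or iterate the construction.

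The main obstacle I anticipate is precisely the realizability step: ensuring that the graph $G$ whose non-edges are "pairs appearing together in some member of $\mathcal{M}$" has \emph{no} maximal independent sets other than the members of $\mathcal{M}$ — stray maximal independent sets would break well-coveredness or the uniqueness counts. Controlling this means the family $\mathcal{M}$ must be "locally closed": whenever an independent set $I$ (w.r.t. the induced non-edges) is not contained in any member, some pair in $I$ must be an edge. Verifying this for the design/code-based $\mathcal{M}$, and simultaneously balancing the alphabet-size constraints so that both "$m$ arbitrarily large" and "$\al$ fixed, $k$ fixed" can be met, is the technical heart; everything else (the pair-counting identities, the disjoint-union bootstrapping to get arbitrary $\pi$) is bookkeeping.
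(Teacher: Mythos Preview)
Your outline has the right architecture --- build graphs with the $(k+1)$-uniqueness / $k$-abundance property, then combine them to realize any ordering --- and this is exactly the paper's strategy. But two concrete steps in your plan do not work as written.

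\textbf{The combination step.} You propose to combine building blocks by disjoint union, noting correctly that this multiplies independence polynomials. But disjoint union also adds independence numbers: the union of several graphs each with $\alpha=\al$ has $\alpha$ a multiple of $\al$, so you leave the class of graphs with independence number $\al$. Moreover, multiplication of polynomials does not let you adjust a single coefficient while holding the others fixed, so the ``bootstrapping from the largest index down'' is not routine linear algebra in that setting. The paper instead uses the \emph{join}: for $j\ge 1$ one has $i_j(G_1\vee G_2)=i_j(G_1)+i_j(G_2)$ and $\alpha(G_1\vee G_2)=\al$ when $\alpha(G_1)=\alpha(G_2)=\al$. With join the building-block polynomials $\sum_{j\ge k+1}\binom{\al}{j}x^j$ span (under positive combinations) exactly the sequences satisfying $a_1/\binom{\al}{1}\le\cdots\le a_\al/\binom{\al}{\al}$, and the Roller Coaster ordering follows.

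\textbf{The construction step.} Your MDS-code graph collapses for $k\ge 1$, and this is not just a technical wrinkle. With blocks as coordinates and symbols as vertices, you make $(i,a)\sim(j,b)$ an edge iff no codeword contains both. But an MDS code of dimension $k+1\ge 2$ has the property that \emph{any} two coordinate--symbol pairs extend to a codeword, so there are no cross-block edges at all; the graph is a disjoint union of $\al$ cliques and \emph{every} transversal is a maximal independent set, destroying $(k+1)$-uniqueness. You correctly flag realizability as the main obstacle, but the pairwise edge rule simply cannot encode the codeword constraint once $k\ge 1$. The paper's remedy is to enrich the vertices: a vertex in ``block $i$'' is not a single symbol but an entire function $f:\binom{[\al]\setminus\{i\}}{k}\to[m]$, and two vertices are adjacent iff they agree on all $k$-subsets avoiding both indices. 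A clique then consists of restrictions of a single global function $f:\binom{[\al]}{k}\to[m]$; any $k+1$ vertices pin down $f$ on every $k$-subset (uniqueness), while $k$ vertices leave the value on exactly one $k$-subset free (giving $m$ extensions). With this definition the realizability check is a two-line argument, whereas for the coordinate--symbol model it is not merely hard but false.
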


In addition, Michael and Traves proved the conjecture for $\al\leq 7$.  Matchett \cite{M} was able to prove the Roller Coaster Conjecture for $\al\leq 11$.  He also proved that for sufficiently large $\al$, the last $(.1705)\al$ terms in the independence sequence of some well-covered graph can be any-ordered.  Related work has been done in the context of pure $O$-sequences in order ideals \cite{BMMNZ}.

We will show that a partial converse to Theorem~\ref{T:mt} is true.  Consider the following definition.
\begin{definition}
We say that a polynomial $a_{\al}x^{\al}+\cdots+a_1x$ is an \emph{approximate 
well-covered independence polynomial} if for all real numbers 
$\epsilon>0$, there exists a well-covered graph $G$ of independence 
number $\al$ and a real number $T$ such that for all $1\le k\le \al$, 
\begin{equation}
	\abs{\frac{i_k(G)}T-a_k}<\epsilon.\label{eqn:eps}
\end{equation}
Given such real numbers $T,\epsilon$ and graph $G$, say that $G$ is an \emph{$\epsilon$-certificate for $a_{\al}x^{\al}+\ldots+a_1x+a_0$ with scaling factor T}.
\end{definition}

Theorem~\ref{T:mt} implies that for an approximate well-covered independence polynomial $a_{\al}x^{\al}+\ldots+a_1x$, we have
\begin{equation}
	\frac{a_1}{\binom{\al}{1}}\le\frac{a_2}{\binom{\al}2}
	\le\ldots\le\frac{a_{\al}}{\binom{\al}{\al}}.\label{eqn:binoms}
\end{equation}
We will show that given a sequence of non-negative real numbers $(a_1,a_2,\ldots,a_{\al})$ satisfying (\ref{eqn:binoms}), the polynomial $\sum_{i=1}^{\al} a_i x^i$ is an approximate well-covered independence polynomial.  In order to do this, we will construct well-covered graphs with independence sequence satisfying (\ref{eqn:eps}) for some real number $T$.  We will construct these graphs from graphs satisfying the following property.

\begin{definition}
For integers $0\le k<\al$ and $1\le m$, say that graph $G$ satisfies the
property $P(k,\al;m)$ if:
\begin{enumerate}
\item All maximal cliques in $G$ are of size $\al$,
\item Each clique of size $k+1$ in $G$ is contained in a unique maximal clique, and
\item Each clique of size $k$ in $G$ is contained in at least $m$ maximal cliques.
\end{enumerate}
\end{definition}

Note that if $G$ satisfies property $P(k,\al;m)$, then its complement is a well-covered graph with independence number $\al$.  It seems that graphs that satisfy property $P(k,\al;m)$ have not been studied up to this point, but they are related to the study of maximal-clique covers and partitions in graphs (see, e.g., \cite{PSW}).  A \emph{maximal-clique covering} of a graph $G$ is a set of maximal cliques in $G$ whose union contains each edge of $G$ at least once.  A maximal-clique covering in which every edge is in exactly one element of the covering is a \emph{maximal-clique partition}.  In our case, instead of covering edges, we are covering cliques of size $k+1$ with maximal cliques.  In addition to this, we are covering cliques of size $k$ with at least $m$ distinct maximal cliques.  Clique coverings have recently been found to have implications in design theory (see, e.g., \cite{BBCM}) and so graphs satisfying $P(k,\al;m)$ may as well.

In Section~\ref{S:construction} we give a construction of graphs which
satisfy property $P(k,\al;m)$.  In Section~\ref{S:proofs}, we use these graphs to
prove that (\ref{eqn:binoms}) is a necessary condition for $a_{\al}x^{\al}+\cdots+a_1x$
to be an approximate well-covered independence polynomial.  Finally,
in Section~\ref{S:proofs2}, we show that this implies the Roller Coaster Conjecture, i.e., we prove the following.

\begin{thm}\label{thm:rct}
	Given a positive integer $\al$ and a permutation $\pi$ of $\set{\ceil{\al/2},\ceil{\al/2}+1,\ldots,\al}$, there is a well-covered graph $G$ with $\alpha(G)=\al$ and
	\[
		i_{\pi(\ceil{\al/2})}(G)<i_{\pi(\ceil{\al/2}+1)}(G)<\cdots<i_{\pi(\al)}(G).
	\]
\end{thm}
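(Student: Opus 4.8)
The plan is to derive the theorem from the fact, established in Section~\ref{S:proofs} by means of the graphs constructed in Section~\ref{S:construction}, that every sequence of non-negative reals $(a_1,\ldots,a_\al)$ satisfying~(\ref{eqn:binoms}) is an approximate well-covered independence polynomial. Given a permutation $\pi$ of $\set{\ceil{\al/2},\ldots,\al}$, it therefore suffices to exhibit non-negative reals $(a_1,\ldots,a_\al)$ obeying~(\ref{eqn:binoms}) with
\[
	a_{\pi(\ceil{\al/2})}<a_{\pi(\ceil{\al/2}+1)}<\cdots<a_{\pi(\al)},
\]
all inequalities strict; feeding such a sequence into the Section~\ref{S:proofs} result with a sufficiently small $\epsilon$ and reading off~(\ref{eqn:eps}) then yields a well-covered $G$ with $\alpha(G)=\al$ and the required ordering of $i_{\pi(\ceil{\al/2})}(G),\ldots,i_{\pi(\al)}(G)$.

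Write $h=\ceil{\al/2}$ and recall that $\binom{\al}h>\binom{\al}{h+1}>\cdots>\binom{\al}\al$ is strictly decreasing. The key observation is that, because of this, the restriction of~(\ref{eqn:binoms}) to the tail, namely that $a_k/\binom{\al}k$ be non-decreasing for $h\le k\le\al$, is a \emph{soft} constraint: setting $a_k=\exp(f_k)$, it reads $f_{k+1}-f_k\ge\log(\binom{\al}{k+1}/\binom{\al}k)$ for $h\le k<\al$, and every right-hand side here is negative. So (assuming $\al\ge 2$; the case $\al=1$ is vacuous) I would let $\delta=\tfrac12\min_{h\le k<\al}\log(\binom{\al}k/\binom{\al}{k+1})>0$ and set $f_{\pi(j)}=(j-h)\,\delta/\al$ for $h\le j\le\al$. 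Then $f_h,\ldots,f_\al$ all lie in $[0,\delta]$ and are strictly increasing in the order prescribed by $\pi$, while $\abs{f_{k+1}-f_k}\le\delta<2\delta\le\log(\binom{\al}k/\binom{\al}{k+1})$, so in fact $a_k/\binom{\al}k$ is strictly increasing on $\set{h,\ldots,\al}$. Finally I would extend to the first half by $a_k=(a_h/\binom{\al}h)\,\binom{\al}k$ for $1\le k<h$, which makes $a_1/\binom{\al}1=\cdots=a_h/\binom{\al}h$; the outcome is a sequence of positive reals satisfying~(\ref{eqn:binoms}) with the prescribed order on the tail.

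With $(a_k)$ so chosen, put $\mu=\min_{h\le j<\al}(a_{\pi(j+1)}-a_{\pi(j)})>0$ and apply the Section~\ref{S:proofs} result with any $\epsilon<\tfrac13\min(\mu,a_{\pi(h)})$: this produces a well-covered graph $G$ with $\alpha(G)=\al$ and a scaling factor $T$ such that $\abs{i_k(G)/T-a_k}<\epsilon$ for all $1\le k\le\al$. Since $i_{\pi(h)}(G)\ge 1$ while $i_{\pi(h)}(G)/T>a_{\pi(h)}-\epsilon>0$, we get $T>0$. Then, for every $h\le j<\al$,
\[
	\frac{i_{\pi(j)}(G)}{T}<a_{\pi(j)}+\epsilon<a_{\pi(j+1)}-\epsilon<\frac{i_{\pi(j+1)}(G)}{T},
\]
the middle inequality because $a_{\pi(j+1)}-a_{\pi(j)}\ge\mu>3\epsilon>2\epsilon$; multiplying through by $T>0$ gives $i_{\pi(j)}(G)<i_{\pi(j+1)}(G)$, which is exactly the desired chain $i_{\pi(\ceil{\al/2})}(G)<\cdots<i_{\pi(\al)}(G)$.

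I expect no genuine obstacle in this deduction itself. The real work of the paper, which the plan above takes for granted, is upstream: building graphs with property $P(k,\al;m)$ in Section~\ref{S:construction}, and combining them (using the monotonicity~(\ref{eqn:binoms})) into $\epsilon$-certificates for arbitrary admissible coefficient sequences in Section~\ref{S:proofs}. The one point in the present reduction worth isolating is the observation in the second paragraph: the condition~(\ref{eqn:binoms}) on the second half of the independence sequence is compatible with \emph{any} prescribed ordering of the tail coefficients, precisely because the binomial weights strictly decrease past the middle, so the ratio constraints forbid only large decreases of $a_k$ and can always be satisfied by confining the tail values to a short enough window.
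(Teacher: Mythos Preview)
Your proposal is correct and follows essentially the same route as the paper: construct a sequence $(a_k)$ satisfying~(\ref{eqn:binoms}) with the prescribed strict ordering on $\{h,\ldots,\al\}$ (the paper's Lemma~\ref{lem:approxrct}), then pass to an actual well-covered graph via a small-$\epsilon$ certificate (the paper's Lemma~\ref{lem:approxwell}). The only difference is cosmetic: the paper takes $a_i=\binom{\al}{i}$ for $i<h$ and $a_i=2^\al+\pi(i)$ for $i\ge h$, which is a bit simpler than your exponential parametrisation but rests on the same observation that the decreasing binomial weights past the middle leave room for any ordering of the tail; your extra care in verifying $T>0$ is a detail the paper leaves implicit.
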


\section{Graph Construction}\label{S:construction} % (fold)

For a set $S$ and positive integer $k$, let $\binom Sk=\setof{A\of S}{\abs{S}=k}$.  Fix integers $k$, $\al$, and $m$ with $1\le k<\al$ and $1\le m$.  For $i\in [\al]$, define $\Fmk{i}$ to be the following set of functions:
\[
	\Fmk{i}=\set{f:\binom{[\al]\setminus \set{i}}{k}\to [m]}.
\]
Our graph is defined in terms of elements of $\Fmk{i}$.

\begin{definition}
	For integers $k$, $\al$, and $m$ with $1\le k<\al$ and $1\le m$, we define $H_{k,\al;m}$ to be the graph with vertex set 
	\[
		\bigcup_{i=1}^{\al} \Fmk{i},
	\]
	and, for $f\in \Fmk{i}$ and $g\in \Fmk{j}$, we let $f\sim g$ if and only if $i\neq j$ and 
	\[
		f\big|_A=g\big|_A,
	\]
	where $A=\binom{[\al]\wo \set{i,j}}k$.  If $k=0$, we define $H_{0,q;m}=mK_q$.
\end{definition}

For example, if $m=1$ and $k\ge 1$, then $\Fmk{i}$ consists of one (constant) function and so $H_{k,\al;m}=K_{\al}$.  For a function $f:\binom{[\al]}k\to[m]$, denote by $C_f$ the set of restrictions of $f$ to $\binom{[\al]\setminus\{i\}}k$ for $1\le i\le \al$. Note that $C_f$ has size $\al$.

\begin{lem}\label{lem:fcns}
For integers $k$, $\al$, and $m$ with $1\le k<\al$ and $1\le m$, every clique in $H_{k,\al;m}$ is contained in a clique of the form $C_f$ for some function $f:\binom{[\al]}k\to[m]$, and so each maximal clique in $H_{k,\al;m}$ is of size $\al$.  Furthermore, each clique of size $k+1$ is contained in a unique such clique, while every clique of size $k$ is contained in $m$ distinct such cliques.
\end{lem}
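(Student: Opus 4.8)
\emph{Approach.} The lemma hinges on a single gluing statement; the remaining assertions are bookkeeping around it. I would begin with the structure of cliques in $H_{k,\al;m}$. Since $1\le k<\al$, for distinct $i,j\in[\al]$ there is a $k$-subset of $[\al]$ containing $i$ but not $j$, so the index sets $\binom{[\al]\setminus\{i\}}{k}$ are pairwise distinct; hence the sets $\Fmk i$ are pairwise disjoint and every vertex $v$ lies in a unique $\Fmk{i(v)}$. Two vertices of the same $\Fmk i$ are non-adjacent by definition, so a clique $Q$ meets each $\Fmk i$ in at most one vertex; write $Q=\{\,f_i:i\in S\,\}$ with $f_i\in\Fmk i$ and $S=\{\,i(v):v\in Q\,\}\of[\al]$, so $|S|=|Q|$, and record that $f_i\adj f_j$ (for distinct $i,j\in S$) means exactly $f_i(B)=f_j(B)$ for every $B\in\binom{[\al]}{k}$ with $i,j\notin B$. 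Note also that each $C_f$ is a clique of exactly $\al$ vertices: its members $f\big|_{\binom{[\al]\setminus\{i\}}{k}}$ have pairwise distinct domains, hence are distinct, and any two of them restrict to the common function $f\big|_{\binom{[\al]\setminus\{i,j\}}{k}}$.

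\emph{Gluing.} Given a clique $Q=\{f_i:i\in S\}$ as above, I would construct $f\colon\binom{[\al]}{k}\to[m]$ by setting, for $B\in\binom{[\al]}{k}$: $f(B)=f_i(B)$ for an arbitrary $i\in S\setminus B$ when $S\not\of B$, and $f(B)$ an arbitrary value when $S\of B$ (which forces $|S|\le k$). The first clause is well defined precisely by the adjacency condition, since for $i,i'\in S\setminus B$ neither index lies in $B$. For each $i\in S$ and each $B\in\binom{[\al]\setminus\{i\}}{k}$ one has $i\in S\setminus B$, so $f(B)=f_i(B)$; thus $f\big|_{\binom{[\al]\setminus\{i\}}{k}}=f_i$ and $Q\of C_f$. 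This shows every clique lies in some $C_f$. Since each $C_f$ is a clique of size $\al$, a maximal clique equals the $C_f$ containing it and so has size $\al$.

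\emph{Counting.} I would first note the rigidity fact $C_f=C_g\Rightarrow f=g$: for any $B\in\binom{[\al]}{k}$, pick $i\notin B$ (possible since $k<\al$) and read $f(B)=g(B)$ off the common vertex of $\Fmk i$. If $|Q|=k+1$ then $|S|>k$, so no $k$-set contains $S$; hence any $f$ with $Q\of C_f$ must satisfy $f(B)=f_i(B)$ for each $B$ and any $i\in S\setminus B$, so $f$ — and thus the containing $C_f$ — is unique. If $|Q|=k$ then $S$ is itself a $k$-set, and $B=S$ is the only $k$-set containing $S$; for $B\ne S$ the value $f(B)$ is forced as before, while $f(S)$ is unconstrained by the equations $f\big|_{\binom{[\al]\setminus\{i\}}{k}}=f_i$, $i\in S$, since none of those restrictions involves $S$. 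Hence there are exactly $m$ admissible functions $f$, one for each value $f(S)\in[m]$, each of which works by the gluing construction; and the corresponding cliques $C_f$ are distinct, since for any $i\notin S$ the vertex $f\big|_{\binom{[\al]\setminus\{i\}}{k}}\in\Fmk i$ evaluates to $f(S)$ at $S$.

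The only genuinely nontrivial point — the main obstacle, such as it is — is recognizing that the adjacency relation of $H_{k,\al;m}$ is engineered so that any compatible family $(f_i)_{i\in S}$ of local colorings patches to a global coloring $f$, and then pinpointing the sole obstruction to $f$ being uniquely determined by $Q$, namely a $k$-set $B$ containing all of $S$ — which occurs for no $B$ when $|Q|=k+1$ and for exactly one $B$ when $|Q|=k$. Everything else is manipulation of restrictions of functions.
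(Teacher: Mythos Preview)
Your proof is correct and follows essentially the same route as the paper's: represent a clique as a compatible family $(f_i)_{i\in S}$, glue it to a global $f:\binom{[\al]}{k}\to[m]$, and observe that the only freedom in choosing $f$ comes from $k$-sets $B\supseteq S$, of which there are none when $|S|=k+1$ and exactly one when $|S|=k$. You are in fact slightly more careful than the paper in two places---explicitly verifying $|C_f|=\al$ and proving the injectivity $C_f=C_g\Rightarrow f=g$ needed to conclude the $m$ containing cliques are genuinely distinct---but the underlying argument is the same.
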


\begin{proof}
	In order for a set of vertices in $H_{k,\al;m}$, say $\set{f_1,f_2,\ldots,f_r}$, to be a clique, it must be the case that, for each $j\in [r]$, there is an $i_j\in [\al]$ such that $f_j\in \Fmk{i_j}$.  Further, we have $i_j\neq i_k$ if $j\neq k$.  If $A\in \binom{[\al]}k$ and $i_j\not\in A$ for some $j\in [r]$, then $A$ is in the domain of $f_j$ and any other function in the clique must agree with $f_j$ on $A$ (provided $A$ is in its domain).  Thus, any clique in $H_{k,\al;m}$ consists of restrictions of functions of the form $f:\binom{[\al]}k\to [m]$.  Note that if $B\in \binom{[\al]}k$ and $B\supseteq \setof{i_j}{j\in [r]}$, then $B$ is not in the domain of any of the functions in the clique.  
	
	Consider a clique in $H_{k,\al;m}$ of size $k+1$ consisting of vertices $\set{g_1,g_2,\ldots,g_{k+1}}$ where $g_j\in \Fmk{i_j}$ for $j\in [k+1]$.  There is no $A\in \binom{[\al]}k$ such that $A\supseteq \setof{i_j}{j\in [k+1]}$.  Thus, there is a unique $f:\binom{[\al]}k\to [m]$ such that $g_j\in C_f$ for each $j\in [k+1]$ and so there is a unique $\al$-clique containing the $(k+1)$-clique. 
	
	If $\set{h_1,h_2,\ldots,h_k}$ is a $k$-clique in $H_{k,\al;m}$ where $h_j\in \Fmk{i_j}$ for $j\in [k]$, then $B=\setof{i_j}{j\in [k]}\in \binom{[\al]}k$ is not in the domain of any of the $h_j$s.  Since a value on $B$ has not been specified, there are $m$ functions $f:\binom{[\al]}k\to [m]$ such that $h_j\in C_f$ for all $j\in [k]$.  Therefore, the $k$-clique is contained in at least $m$ maximal cliques in $H_{k,\al;m}$.
\end{proof}

\begin{thm}\label{C:bigguess}
For any integers $k$, $\al$, and $m$ with $0\le k<\al$ and $1\le m$, the graph $H_{k,\al;m}$ satisfies property $P(k,\al;m)$.
\end{thm}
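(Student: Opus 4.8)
The plan is to read this off Lemma~\ref{lem:fcns}, handling the degenerate case $k=0$ separately since there $H_{k,\al;m}$ is defined directly as $mK_{\al}$ rather than through the function construction. So first I would dispose of $k=0$: the graph $mK_{\al}$ is a disjoint union of $m$ copies of $K_{\al}$, whose maximal cliques are exactly these $m$ components, each of size $\al$, which is property~(1). A clique of size $k+1=1$ is a single vertex; it lies in exactly one component and hence in a unique maximal clique, which is property~(2). The only clique of size $k=0$ is the empty set, which is contained in all $m$ maximal cliques (using $m\ge 1$), which is property~(3). Thus $mK_{\al}$ satisfies $P(0,\al;m)$.

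For $1\le k<\al$, I would derive everything from Lemma~\ref{lem:fcns} together with one structural observation: the maximal cliques of $H_{k,\al;m}$ are precisely the cliques $C_f$ as $f$ ranges over functions $f:\binom{[\al]}k\to[m]$. Indeed, the lemma says every clique is contained in some $C_f$, and we already know $|C_f|=\al$; hence no clique has more than $\al$ vertices, a maximal clique $Q$ must coincide with whatever $C_f$ contains it (so $|Q|=\al$, giving property~(1)), and conversely each $C_f$ is maximal since any clique properly containing it would have more than $\al$ vertices. With this identification in hand, properties~(2) and~(3) are immediate translations of the last two assertions of Lemma~\ref{lem:fcns}: a clique of size $k+1$ lies in a unique $C_f$, hence in a unique maximal clique; and a clique of size $k$ lies in $m$ distinct cliques $C_f$, hence in at least $m$ distinct maximal cliques.

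There is essentially no obstacle here, since Lemma~\ref{lem:fcns} already carries the combinatorial weight. The one point that needs care is the double inclusion showing that $\{C_f\}$ is exactly the family of maximal cliques: both that every maximal clique is some $C_f$ (so the uniqueness in property~(2) is not spoiled by an exotic maximal clique outside the family) and that every $C_f$ is itself maximal (so the $m$ cliques $C_f$ through a given $k$-clique genuinely count toward property~(3)). Both directions follow at once from the two facts ``$|C_f|=\al$'' and ``every clique is contained in some $C_f$,'' so the write-up will be short.
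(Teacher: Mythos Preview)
Your proposal is correct and follows essentially the same approach as the paper: treat $k=0$ directly via $mK_{\al}$, and for $k\ge 1$ invoke Lemma~\ref{lem:fcns}. You are in fact more careful than the paper on one point: the paper simply asserts that the $k\ge 1$ case is immediate, whereas you explicitly verify the identification of the family $\{C_f\}$ with the family of maximal cliques, which is what allows the ``unique $C_f$'' and ``$m$ distinct $C_f$'' conclusions of the lemma to translate into properties~(2) and~(3) of $P(k,\al;m)$.
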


\begin{proof}
	The case when $k\ge 1$ immediately from Lemma~\ref{lem:fcns}.  When $k=0$, we have $H_{0,\al;m}=mK_q$.  Each vertex, or $K_1$, in $mK_q$ is in a unique $K_q$, while the empty set is in all $m$ of the $K_q$s.  Thus, $H_{0,\al;m}$ satisfies $P(0,\al;m)$.
\end{proof}

% section construction (end)

\section{Partial converse to Theorem~\ref{T:mt}}\label{S:proofs} % (fold)

Our main goal in this section is to prove the following theorem.

\begin{thm}\label{thm:converse}
For all positive integers $\al$ and sequences of real numbers $(a_1, \ldots, a_{\al})$ satisfying
\[\frac{a_1}{\binom{\al}{1}}\le\frac{a_2}{\binom{\al}2}\le\ldots\le\frac{a_{\al}}{\binom{\al}{\al}},\]
$a_1x+a_2x^2+\ldots+a_{\al}x^{\al}$ is an approximate well-covered independence polynomial.
\end{thm}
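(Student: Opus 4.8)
The plan is, given $\epsilon>0$, to take $G$ to be a disjoint union of many copies of the graphs $H_{k,\al;m}$ of Section~\ref{S:construction}, with the parameters chosen so that the various blocks are balanced against one another, and then to verify that $\overline{G}$ is an $\epsilon$-certificate for $a_1x+\cdots+a_\al x^\al$ with an explicit scaling factor. Two elementary facts about disjoint unions will be used. First, since every maximal clique of $H_{k,\al;m}$ has size $\al$ (Theorem~\ref{C:bigguess}), every maximal clique of a disjoint union of such graphs has size $\al$ as well; as the maximal independent sets of $\overline{G}$ are exactly the maximal cliques of $G$, this makes $\overline{G}$ well-covered with independence number $\al$. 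Second, for $j\ge1$ the quantity $i_j(\overline{G})$ --- namely the number of $j$-cliques of $G$ --- is additive over the components of $G$.

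I would first record the clique counts of a single block. Fix $1\le j\le\al$ and $0\le k<\al$. By Lemma~\ref{lem:fcns} every $j$-clique of $H_{k,\al;m}$ is contained in a maximal clique $C_f$ with $f\colon\binom{[\al]}{k}\to[m]$, and there are $m^{\binom{\al}{k}}$ such $C_f$; moreover a $j$-clique with index set $S\in\binom{[\al]}{j}$ (the subscripts of the vertex classes $\Fmk{i}$ to which its vertices belong) lies in exactly those $C_f$ extending its common restriction to the $\binom{\al-j}{k-j}$ sets $A\in\binom{[\al]}{k}$ with $S\subseteq A$ --- that is, in exactly $m^{\binom{\al-j}{k-j}}$ of the maximal cliques, reading $\binom{\al-j}{k-j}=0$ when $j>k$. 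Double-counting incidences between maximal cliques and the $j$-subsets they contain therefore gives
\[
	i_j\bigl(\overline{H_{k,\al;m}}\bigr)=\binom{\al}{j}\,m^{\binom{\al}{k}-\binom{\al-j}{k-j}},
\]
which holds for $k=0$ too since $H_{0,\al;m}=mK_\al$. Thus $i_j(\overline{H_{k,\al;m}})=\binom{\al}{j}m^{\binom{\al}{k}}$ when $j>k$, while $i_j(\overline{H_{k,\al;m}})\le\binom{\al}{j}m^{\binom{\al}{k}-1}$ when $1\le j\le k$; in particular, dividing by $m^{\binom{\al}{k}}$ and letting $m\to\infty$, the normalized independence sequence of $\overline{H_{k,\al;m}}$ tends to the sequence equal to $\binom{\al}{j}$ in positions $j>k$ and to $0$ in positions $1\le j\le k$.

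Next I would decompose the target. Put $\lambda_0=a_1/\binom{\al}{1}$ and $\lambda_k=a_{k+1}/\binom{\al}{k+1}-a_k/\binom{\al}{k}$ for $1\le k\le\al-1$; by (\ref{eqn:binoms}) and $a_1\ge0$ each $\lambda_k\ge0$, telescoping gives $a_j=\binom{\al}{j}\sum_{k=0}^{j-1}\lambda_k$ for $1\le j\le\al$, and $\sum_{k=0}^{\al-1}\lambda_k=a_\al$. If $a_\al=0$ then every $a_j=0$ and the edgeless graph on $\al$ vertices, with a sufficiently large scaling factor, is an $\epsilon$-certificate; so assume $a_\al>0$. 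Fix large positive integers $R$ and $M$, put $m_k=\up{M^{1/\binom{\al}{k}}}$ and $N_k=\up{\lambda_k R}$ for each $k$, and let $G$ be the disjoint union, over those $k$ with $\lambda_k>0$, of $N_k$ copies of $H_{k,\al;m_k}$ --- a nonempty graph, since $\sum_k\lambda_k=a_\al>0$, so $\overline{G}$ is well-covered with independence number $\al$. By the block counts and additivity,
\[
	i_j(\overline{G})=\binom{\al}{j}\sum_{k<j}N_k m_k^{\binom{\al}{k}}+\binom{\al}{j}\sum_{k\ge j}N_k m_k^{\binom{\al}{k}-\binom{\al-j}{k-j}}\qquad(1\le j\le\al),
\]
and in particular $T:=i_\al(\overline{G})=\sum_k N_k m_k^{\binom{\al}{k}}$. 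For each fixed $k$, $m_k\to\infty$ and $m_k^{\binom{\al}{k}}/M\to1$ as $M\to\infty$, and $N_k/R\to\lambda_k$ as $R\to\infty$; since there are only finitely many $k$, it follows that $T/(RM)\to a_\al$, that the first sum in the display, divided by $RM$, tends to $\binom{\al}{j}\sum_{k<j}\lambda_k=a_j$, and that the second sum, divided by $RM$, tends to $0$ (each of its terms being at most $\binom{\al}{j}N_k m_k^{\binom{\al}{k}-1}$, with $m_k^{\binom{\al}{k}-1}/M\to0$). Hence $i_j(\overline{G})/T\to a_j/a_\al$ for every $j\in\{1,\dots,\al\}$, so choosing $R$ and $M$ large enough makes $\bigl|i_j(\overline{G})/(T/a_\al)-a_j\bigr|<\epsilon$ for all $j$ at once, which exhibits $\overline{G}$ as an $\epsilon$-certificate for $a_1x+\cdots+a_\al x^\al$ with scaling factor $T/a_\al$.

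The conceptual content is that (\ref{eqn:binoms}) (together with $a_1\ge0$) is exactly the statement that $(a_1,\dots,a_\al)$ lies in the cone generated by the $\al$ sequences obtained from $\bigl(\binom{\al}{1},\dots,\binom{\al}{\al}\bigr)$ by zeroing out the first $k$ entries, $0\le k\le\al-1$, and that these truncated binomial sequences are precisely the limits of the normalized independence sequences of the blocks $\overline{H_{k,\al;m}}$. The one genuine technical obstacle is that the $\al$ blocks carry $\al$ mutually incompatible natural normalizations $m^{\binom{\al}{k}}$; the choice $m_k\approx M^{1/\binom{\al}{k}}$ reconciles them, giving every block comparable weight $\approx M$, while still sending each $m_k\to\infty$ --- and it is this last fact that makes the lower tails $i_j(\overline{H_{k,\al;m_k}})$ with $j\le k$ negligible against the scaling factor.
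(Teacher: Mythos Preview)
Your proof is correct and rests on the same decomposition as the paper's: writing $a_j=\binom{\al}{j}\sum_{k<j}\lambda_k$ with $\lambda_k\ge0$ (your $\lambda_k$ is the paper's $b_{k+1}$) and realizing each ``tail'' sequence $\bigl(0,\ldots,0,\binom{\al}{k+1},\ldots,\binom{\al}{\al}\bigr)$ via the blocks $\overline{H_{k,\al;m}}$. The execution differs in packaging. The paper first shows that each tail polynomial is an approximate well-covered independence polynomial (Lemma~\ref{lem:hpoly}) and then proves abstractly that this class is closed under positive linear combinations, by joining graphs and balancing their scaling factors through multiplicities (Lemmas~\ref{lem:sumpoly} and~\ref{lem:polylincomb}). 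You instead build the final certificate in one shot as the complement of a disjoint union of $N_k$ copies of $H_{k,\al;m_k}$, using the exact clique-count formula $i_j(\overline{H_{k,\al;m}})=\binom{\al}{j}m^{\binom{\al}{k}-\binom{\al-j}{k-j}}$ (sharper than the bounds the paper extracts from property $P(k,\al;m)$) and pre-balancing the blocks via $m_k\approx M^{1/\binom{\al}{k}}$ so that every $m_k^{\binom{\al}{k}}$ is comparable to a common $M$. The paper's modular route separates concerns and makes the closure lemmas reusable; your direct route is more explicit and delivers the certificate in a single limit computation. Both arguments tacitly require $a_1\ge0$ (so that $\lambda_0\ge0$), which the theorem statement omits but the introduction includes.
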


We begin by using the graphs $H_{k,\al;m}$ to generate approximate well-covered independence polynomials.  

\begin{lem}\label{lem:hpoly}
For all integers $0\le k< \al$, the polynomial $\sum_{j=k+1}^{\al}\binom{\al}{j}x^j$ is an approximate well-covered independence polynomial.
\end{lem}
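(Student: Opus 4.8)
The plan is to use, as the $\epsilon$-certificate, the complement $G_m=\overline{H_{k,\al;m}}$ of the graph built in Section~\ref{S:construction}, with scaling factor $T$ equal to the number $M=M(m)$ of maximal cliques of $H_{k,\al;m}$. Since the complement of a graph satisfying $P(k,\al;m)$ is well-covered with independence number $\al$, Theorem~\ref{C:bigguess} guarantees that each $G_m$ is of the kind required by the definition of an $\epsilon$-certificate, and since the independent sets of $G_m$ are exactly the cliques of $H_{k,\al;m}$, the value $i_j(G_m)$ is the number of $j$-cliques of $H_{k,\al;m}$. Thus the whole problem reduces to counting the cliques of $H_{k,\al;m}$ and then letting $m\to\infty$.

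The first, easier half of the count: by Lemma~\ref{lem:fcns} the maximal cliques of $H_{k,\al;m}$ are exactly the sets $C_f$ with $f:\binom{[\al]}{k}\to[m]$, and these are pairwise distinct (since $k<\al$, each $f$ is recovered from the restrictions making up $C_f$), so $M=m^{\binom{\al}{k}}$, which tends to infinity with $m$; the degenerate case $k=0$, where $H_{0,\al;m}=mK_{\al}$ and $M=m$, is handled by direct inspection. For $j\ge k+1$, each $j$-clique contains a $(k+1)$-clique, which by property $P(k,\al;m)$ sits in a unique maximal clique, so the $j$-clique itself sits in a unique maximal clique and distinct maximal cliques share no $j$-clique; as each of the $M$ maximal cliques has $\al$ vertices, hence $\binom{\al}{j}$ cliques of size $j$ among its subsets, we get $i_j(G_m)=\binom{\al}{j}M$ for all $k+1\le j\le\al$.

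The substantive step is counting $j$-cliques for $1\le j\le k$. Here I would exploit the structure behind Lemma~\ref{lem:fcns}: such a clique has the form $\{h_i:i\in I\}$ with $I\in\binom{[\al]}{j}$ and $h_i\in\Fmk{i}$, and the adjacency relation amounts to requiring, for every $k$-set $B\subseteq[\al]$, that the values $h_i(B)$ coincide over all $i\in I$ with $i\notin B$. A $k$-set $B\supseteq I$ (which exists since $j\le k$) carries neither a value nor a constraint, since no $h_i$ is defined on it; every other $k$-set carries exactly one common value from $[m]$. This yields a bijection between the $j$-cliques and the pairs $(I,\varphi)$ with $I\in\binom{[\al]}{j}$ and $\varphi$ an arbitrary map from the $\binom{\al}{k}-\binom{\al-j}{k-j}$ $k$-sets not containing $I$ into $[m]$, so
\[
	i_j(G_m)=\binom{\al}{j}\,m^{\binom{\al}{k}-\binom{\al-j}{k-j}}.
\]
Since $\binom{\al-j}{k-j}\ge 1$ for $1\le j\le k$, this gives $i_j(G_m)\le\binom{\al}{j}\,m^{\binom{\al}{k}-1}=\binom{\al}{j}M/m$.

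It remains to assemble the certificate. Taking $T=M(m)$ we have $i_j(G_m)/T=\binom{\al}{j}$ exactly for $k+1\le j\le\al$, and $i_j(G_m)/T\le\binom{\al}{j}/m$ for $1\le j\le k$; given $\epsilon>0$, picking $m$ so large that $\binom{\al}{j}/m<\epsilon$ for all $1\le j\le k$ makes $G_m$ an $\epsilon$-certificate for $\sum_{j=k+1}^{\al}\binom{\al}{j}x^j$ with scaling factor $T$, which is precisely the statement of the lemma. I expect the main obstacle to be justifying the $1\le j\le k$ count — verifying that the pairwise agreement conditions decouple over the $k$-sets $B$ and that the $k$-sets containing $I$ contribute nothing — since the $j\ge k+1$ count and the limiting argument follow routinely from property $P(k,\al;m)$ and Lemma~\ref{lem:fcns}.
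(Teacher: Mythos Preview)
Your proof is correct and follows the same overall strategy as the paper: take the complement of $H_{k,\al;m}$, set $T$ equal to the number of maximal cliques, and show the $j$-clique count equals $T\binom{\al}{j}$ for $j\ge k+1$ and is $o(T)$ for $j\le k$ as $m\to\infty$. The one difference is in the small-$j$ case: you compute the exact count $i_j(G_m)=\binom{\al}{j}m^{\binom{\al}{k}-\binom{\al-j}{k-j}}$ by unwinding the structure of $H_{k,\al;m}$, whereas the paper bypasses this via a double count using only property $P(k,\al;m)$ --- there are $T\binom{\al}{j}$ pairs (maximal clique, $j$-subclique), and each $j$-clique extends to a $k$-clique and hence lies in at least $m$ maximal cliques, giving at most $T\binom{\al}{j}/m$ cliques of size $j$. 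Your exact formula is correct and more informative, but the paper's argument is shorter and shows that the specific construction of $H_{k,\al;m}$ is irrelevant once $P(k,\al;m)$ is established.
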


\begin{proof}
Fix $\epsilon>0$, and let $m$ be a positive integer such that $\frac{2^{\al}}m<\epsilon$.  Note that, by Theorem~\ref{C:bigguess}, for integers $k$, $\al$, and $m$ with $0\le k< \al$ and $1\le m$, $H_{k,\al;m}$ satisfies property $P(k,\al;m)$ and so the complement of $H_{k,\al;m}$ is a well-covered graph with independence number $\al$.

Suppose $H_{k,\al;m}$ has $T$ cliques of size $\al$, and let us consider the number of cliques of size $j$ for $1\le j\le \al$.  Clearly, there are $T\binom{\al}{j}$ pairs of cliques $(K_1,K_2)$ such that $K_1$ is of size $\al$, $K_2$ of size $j$, and $K_2\subseteq K_1$.  If $j\geq k+1$, then each clique of size $j$ contains a clique of size $k+1$, and
hence is contained in at most one clique of size $\al$. Since all maximal cliques
are of size $\al$, each clique of size $j$ is contained in a unique clique of size
$\al$, and hence there are $T\binom{\al}{j}$ cliques of size $j$. On the other hand, if $1\leq j\le k$, then each clique of size $j$ is contained in a clique of size $k$ and is therefore contained in at least $m$ cliques of size $\al$. Hence there are at most $T\binom{\al}{j}/m<T\epsilon$ cliques of size $j$.

Thus, if $G$ is the complement of $H_{k,\al;m}$, then $\frac{i_j(G)}T=\binom{n}{j}$ 
for $j\geq k+1$.  For $1\leq j\le k$, we have $\abs{\frac{i_j(G)}T-0}<\epsilon$, so $G$ is an
$\epsilon$-certificate for $\sum_{j=k+1}^n\binom{n}{j}x^j$ with scaling factor $T$.
\end{proof}

Now we show that the class of approximate well-covered independence polynomials
of a given degree is additive.  In order to do this, we use the join\footnote{The \emph{join} of graphs $G$ and $H$, denoted $G\vee H$, is the graph with vertex set $V(G)\cup V(H)$ and edge set $E(G)\cup E(H)\cup \setof{xy}{x\in V(G), y\in V(H)}$.} operation on graphs.  Note that if $G$ and $H$ are graphs and $k\geq 1$, then $i_k(G\vee H)=i_k(G)+i_k(H)$.

\begin{lem}\label{lem:sumpoly}
If $P_1(x)$ and $P_2(x)$ are approximate well-covered independence polynomials of degree $\al$,
then $P_1(x)+P_2(x)$ is an approximate well-covered independence polynomial of degree $\al$.
\end{lem}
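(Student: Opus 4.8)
The plan is to combine $\epsilon$-certificates for $P_1$ and $P_2$ by taking a disjoint join, rescaling so their scaling factors match. First I would fix $\epsilon>0$ and, using the definition, choose a well-covered graph $G_1$ of independence number $\al$ that is an $\epsilon'$-certificate for $P_1$ with some scaling factor $T_1$, and likewise $G_2$ an $\epsilon'$-certificate for $P_2$ with scaling factor $T_2$, where $\epsilon'>0$ will be chosen small at the end. The key observation is that if I replace $G_1$ by a disjoint union of $c_1$ copies of itself, the independence numbers of maximal independent sets are unchanged (it is still well-covered with independence number $\al$), while $i_k$ is multiplied by $c_1$ for every $k\ge 1$; the same holds for $G_2$ with a multiplier $c_2$. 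So I would like to pick positive integers $c_1,c_2$ with $c_1T_1\approx c_2T_2$, then form $G=(c_1G_1)\vee(c_2G_2)$ and use $i_k(G\vee H)=i_k(G)+i_k(H)$ for $k\ge 1$, as noted just before the statement.

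The second step is to check $G$ is a legitimate competitor. The join of two well-covered graphs with the same independence number $\al$ is again well-covered with independence number $\al$: a maximal independent set in $G\vee H$ cannot use vertices from both sides (every vertex of $G$ is adjacent to every vertex of $H$), so it is exactly a maximal independent set of $G$ or of $H$, hence of size $\al$. Then, writing $T=c_1T_1$ (and arranging $c_2T_2$ to be very close to $T$), for each $1\le k\le\al$,
\[
	\frac{i_k(G)}{T}=\frac{c_1\,i_k(G_1)}{T}+\frac{c_2\,i_k(G_2)}{T}
	=\frac{i_k(G_1)}{T_1}+\frac{c_2T_2}{T}\cdot\frac{i_k(G_2)}{T_2},
\]
and each of the two summands is within $O(\epsilon')$ of $a_k^{(1)}$ and of $a_k^{(2)}$ respectively, provided $c_2T_2/T$ is close to $1$; summing the coefficientwise errors gives $\lvert i_k(G)/T-(a_k^{(1)}+a_k^{(2)})\rvert<\epsilon$ once $\epsilon'$ is small enough relative to $\epsilon$ and the (bounded) coefficients of $P_1,P_2$.

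The main obstacle is the arithmetic of matching scaling factors: $T_1$ and $T_2$ are not under our control and need not be commensurable, so exact equality $c_1T_1=c_2T_2$ is unavailable. I would handle this by noting that $\epsilon$-certificates are robust under small perturbations of the scaling factor — if $G$ is an $\epsilon'$-certificate with factor $T'$ and $\lvert T'/T-1\rvert$ is small, it is an $\epsilon$-certificate with factor $T$ — so it suffices to choose $c_1,c_2$ large enough that $c_1T_1/(c_2T_2)$ is within any prescribed tolerance of $1$, which is possible since $\{c_1T_1/(c_2T_2):c_1,c_2\in\N\}$ is dense in $(0,\infty)$ (take $c_2/c_1$ a good rational approximation to $T_1/T_2$). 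Alternatively, and more cleanly, I can first choose $\epsilon'$ small, obtain $G_1,G_2$, and only then pick $c_1,c_2$; this decoupling is why the proof goes through. Once the factor is matched to within the needed tolerance, the coefficientwise triangle-inequality estimate above finishes the argument, and $\deg(P_1+P_2)=\al$ since the graphs produced all have independence number exactly $\al$.
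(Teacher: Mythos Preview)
Your overall strategy---take certificates $G_1,G_2$, replicate each so the scaling factors nearly match, then join---is exactly the paper's approach. However, there is a genuine slip in the replication step. You write that replacing $G_1$ by a \emph{disjoint union} of $c_1$ copies leaves it well-covered with independence number $\al$ and multiplies each $i_k$ by $c_1$. Both claims are false for disjoint union: in $c_1G_1$ a maximal independent set picks a maximal independent set from every component, so its size is $c_1\al$, and $i_k(c_1G_1)$ is a convolution, not $c_1\,i_k(G_1)$ (already $i_2$ counts pairs from different copies). Consequently $(c_1G_1)\vee(c_2G_2)$ need not be well-covered of independence number $\al$, and your displayed identity $i_k(G)/T=i_k(G_1)/T_1+(c_2T_2/T)\,i_k(G_2)/T_2$ does not follow.

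The fix is simply to replicate via the \emph{join} rather than the disjoint union: set $G=\bigl(\bigvee_{i=1}^{c_1}G_1\bigr)\vee\bigl(\bigvee_{i=1}^{c_2}G_2\bigr)$. Then, since $i_k(A\vee B)=i_k(A)+i_k(B)$ for $k\ge 1$, one gets $i_k(G)=c_1 i_k(G_1)+c_2 i_k(G_2)$ and $G$ is well-covered with $\alpha(G)=\al$, exactly the properties you asserted. With this correction your density/approximation argument for choosing $c_1,c_2$ and the final triangle-inequality estimate go through and coincide with the paper's proof.
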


\begin{proof}
Fix $\epsilon>0$. Let $G_1, G_2$ be $\frac{\epsilon}3$-certificates of $P_1(x)$ and 
$P_2(x)$ with scaling factors $T_1$ and $T_2$ respectively.  Suppose that all coefficients of $P_1(x)$ and $P_2(x)$ are bounded above by $N$, and let $k_1, k_2$ be positive integers such that 
\[
	1-\frac{\min(k_1T_1, k_2T_2)}{\max(k_1T_1, k_2T_2)}<\frac{\epsilon}{6N}.
\] 
Define $T:=\max(k_1T_1, k_2T_2)$.

Let $G$ be the graph defined as the join of $k_1$ copies of $G_1$ joined to the join of $k_2$ copies of $G_2$, i.e., 
\[
	G=\left(\bigvee_{i=1}^{k_1} G_1\right)\vee\left(\bigvee_{i=1}^{k_2} G_2\right).
\]
All independent sets in $G$ are completely contained in a single copy of $G_1$ or a single copy of $G_2$.  As such, $G$ is well-covered (since $\al(G_1)=\al(G_2)=\al$) and $i_j(G)=k_1i_j(G_1)+k_2i_j(G_2)$ for all $j\geq 1$.

Suppose that, for $j\geq 1$, the $x^j$ coefficients of $P_1(x)$ and $P_2(x)$ are $p^1_{j}$ and $p^2_{j}$, respectively. Then since $G_1$ is a $\frac{\epsilon}3$-certificate of $P_1(x)$ with scaling factor $T_1$, by definition, $\abs{p^1_{j}-\frac{i_j(G_1)}{T_1}}<\frac{\epsilon}3$.  Thus, since $k_1T_1\le T$, we have that 
\[
	\abs{\frac{k_1T_1p^1_{j}}T-\frac{k_1i_j(G_1)}T}<\frac{\epsilon}3.
\]	
Further, $p^1_{j}<N$ and 
\[
	1-\frac{k_1T_1}T<1-\frac{\min(k_1T_1, k_2T_2)}{\max(k_1T_1, k_2T_2)}<\frac{\epsilon}{6N},
\]
and so we have $\abs{p^1_{j}-\frac{k_1T_1p^1_{j}}T}<\frac{\epsilon}6$.  Thus, we see that
\[
	\abs{p^1_{j}-\frac{k_1i_j(G_1)}T}<\frac{\epsilon}2.
\]
Similarly,
\[
	\abs{p^2_{j}-\frac{k_2i_j(G_2)}T}<\frac{\epsilon}2.
\]
It follows that 
\[
	\abs{p^1_{j}+p^2_{j}-\frac{k_1i_j(G_1)+k_2i_j(G_2)}T}=\abs{p^1_{j}+p^2_{j}-\frac{i_j(G)}T}<
	\epsilon,
\]
so $G$ is an $\epsilon$-certificate of $P_1(x)+P_2(x)$ with scaling factor $T$.
\end{proof}

The same is true for more complicated linear combinations.

\begin{lem}\label{lem:polylincomb}
If $P_1(x), P_2(x), \ldots, P_k(x)$ are approximate well-covered independence polynomials of degree $\al$, and $\lambda_1, \ldots, \lambda_k$ are positive real numbers then
$\sum_{i=1}^k\lambda_iP_i(x)$ is an approximate well-covered independence polynomial of degree $\al$.
\end{lem}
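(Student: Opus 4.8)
The plan is to reduce everything to Lemma~\ref{lem:sumpoly} via the observation that scaling by a positive real constant is essentially free, after which a general linear combination follows by induction on $k$.

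First I would record the scaling step: if $P(x) = a_\al x^\al + \cdots + a_1 x$ is an approximate well-covered independence polynomial of degree $\al$ and $\lambda > 0$, then so is $\lambda P(x)$. The point is that the scaling factor $T$ in the definition may be any real number. Given $\epsilon > 0$, choose a well-covered graph $G$ with $\alpha(G) = \al$ and a real number $T'$ forming an $(\epsilon/\lambda)$-certificate for $P(x)$; then $G$ together with $T := T'/\lambda$ is an $\epsilon$-certificate for $\lambda P(x)$, since for every $1 \le j \le \al$ we have $\abs{i_j(G)/T - \lambda a_j} = \lambda\abs{i_j(G)/T' - a_j} < \lambda\cdot(\epsilon/\lambda) = \epsilon$. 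Here positivity of $\lambda$ is used to keep $T$ of the right sign and to preserve the inequality upon multiplying through by $\lambda$.

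Then I would induct on $k$: the base case $k=1$ is the scaling step applied to $\lambda_1 P_1(x)$, and for the inductive step I write
\[
\sum_{i=1}^k \lambda_i P_i(x) = \left(\sum_{i=1}^{k-1}\lambda_i P_i(x)\right) + \lambda_k P_k(x),
\]
where the first summand is an approximate well-covered independence polynomial of degree $\al$ by the inductive hypothesis, the second by the scaling step, and hence the whole sum is one by Lemma~\ref{lem:sumpoly}.

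I expect no real obstacle here: all of the substantive work — the join construction and the $\epsilon$-bookkeeping needed to combine certificates with differing scaling factors — already lives in Lemma~\ref{lem:sumpoly}, which we may quote freely, and the only thing to notice is that the freedom to rescale $T$ makes multiplication by a positive constant immediate.
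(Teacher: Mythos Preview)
Your proof is correct and follows essentially the same approach as the paper: both observe that positive scaling is free by replacing the scaling factor $T$ with $T/\lambda$, and then reduce the general linear combination to repeated applications of Lemma~\ref{lem:sumpoly}. Your version is slightly more explicit about the induction on $k$, but the content is identical.
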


\begin{proof}
If $G$ is an $\epsilon$-certificate of $P_i(G)$ with scaling factor $T$, then
it is a $\lambda_i\epsilon$-certificate of $\lambda_iP_i(G)$ with scaling factor $\frac{T}{\lambda_i}$.  Thus for each $i\in [k]$, $\lambda_iP_i(G)$ is an
approximate well-covered independence polynomial of degree $\al$, and therefore
the sum of them is by Lemma~\ref{lem:sumpoly}.
\end{proof}

With these lemmas in hand, we are now ready to prove the main result of this section, i.e., Theorem~\ref{thm:converse}.

\begin{proof}[Proof of Theorem~\ref{thm:converse}]
Fix a sequence $a_1,a_2,\ldots,a_{\al}$ satisfying 
\[
	\frac{a_1}{\binom{\al}{1}}\le\frac{a_2}{\binom{\al}{2}}\le\ldots\le\frac{a_{\al}}{\binom{\al}{\al}}.
\]
Let $b_1=\frac{a_1}{\binom{\al}{1}}$ and, for $i>1$, let $b_i=\frac{a_i}{\binom{\al}{i}}-\frac{a_{i-1}}{\binom{\al}{i-1}}$. Then, for all $i$, $b_i>0$ and 
\[
	a_i=\binom{\al}i\sum_{j=1}^i b_j.
\]
Let $P_k(x)=\sum_{j=k}^{\al}\binom{\al}{j}x^j$ so, by Lemma~\ref{lem:hpoly}, 
$P_k(x)$ is an approximate well-covered independence polynomial for all $1\le k\le \al$.  Therefore, by Lemma~\ref{lem:polylincomb}, so is 
\[
\sum_{j=1}^{\al} b_jP_j(x)=\sum_{j=1}^{\al} b_j\sum_{i=j}^{\al} \binom{\al}{i}x^i=\sum_{i=1}^{\al} \left(\sum_{j=1}^i\binom{\al}j b_j\right) x^i=\sum_{i=1}^{\al} a_ix^i.\qedhere
\]
\end{proof}

% section (end)

\section{Proof of the Roller Coaster Conjecture}\label{S:proofs2} % (fold)

Finally, we will show that Theorem~\ref{thm:converse} itself implies the Roller Coaster Conjecture.

\begin{lem}\label{lem:approxwell}
If $a_{\al}x^{\al}+\ldots+a_1x$ is an approximate well-covered independence polynomial
and $S$ is a subset of $[\al]$ such that $a_i\neq a_j$ if $i\neq j$ and $i,j\in S$,
then there exists a well-covered graph $G$ of independence number $\al$ such that
for all $j, k\in S$, $i_j(G)<i_k(G)$ if and only if $a_j<a_k$.
\end{lem}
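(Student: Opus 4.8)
The plan is to use the definition of approximate well-covered independence polynomial directly, choosing $\epsilon$ small enough that the $\epsilon$-certificate inherits the strict inequalities among the coefficients indexed by $S$. First I would set
\[
\delta=\min\setof{\abs{a_i-a_j}}{i,j\in S,\ i\neq j},
\]
which is positive by the hypothesis on $S$, and also let $T$ be a scaling factor realizing the certificate. Then I would invoke the definition of approximate well-covered independence polynomial with $\epsilon=\delta/3$ (any constant fraction strictly below $\delta/2$ works): this yields a well-covered graph $G$ of independence number $\al$ and a real number $T$ with $\abs{i_k(G)/T-a_k}<\delta/3$ for all $1\le k\le\al$.

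Next I would verify that $G$ has the required property. For $j,k\in S$ with $a_j<a_k$, we have $a_k-a_j\ge\delta$, and so
\[
\frac{i_k(G)}{T}-\frac{i_j(G)}{T}
> (a_k-\tfrac{\delta}{3}) - (a_j+\tfrac{\delta}{3})
= (a_k-a_j)-\tfrac{2\delta}{3}
\ge \delta-\tfrac{2\delta}{3}=\tfrac{\delta}{3}>0.
\]
Since $T>0$ (it is a positive scaling factor; one may also note $i_{\al}(G)>0$ forces $T>0$ once $\epsilon<a_{\al}$, or simply assume $T>0$ as is implicit in the definition), this gives $i_k(G)>i_j(G)$. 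The reverse implication follows by symmetry — if $a_j<a_k$ is false then either $a_j=a_k$, which is excluded for distinct $j,k\in S$, or $a_j>a_k$, in which case the same argument with the roles swapped gives $i_j(G)>i_k(G)$. Hence for all $j,k\in S$, $i_j(G)<i_k(G)$ if and only if $a_j<a_k$, as desired.

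The argument is essentially a routine $\epsilon$-chasing once the right $\epsilon$ is chosen, so there is no substantial obstacle; the only point requiring a small amount of care is confirming that the scaling factor $T$ is positive so that dividing the inequality $i_k(G)/T>i_j(G)/T$ through by $T$ preserves its direction. This is harmless: if $T\le 0$ then $i_k(G)/T\le 0$ for all $k$, contradicting $\abs{i_k(G)/T-a_k}<\epsilon$ whenever some $a_k>\epsilon$, which we may arrange (for instance the largest $a_i$ with $i\in S$ must be positive if $\abs S\ge 2$ and the inequalities are strict, or we may simply add a harmless positive constant to the certificate's scaling setup). In the write-up I would state this positivity observation once at the start and then proceed with the clean two-line estimate above.
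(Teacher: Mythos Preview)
Your argument is essentially identical to the paper's: both set $\epsilon=\tfrac{1}{3}\min_{i\neq j\in S}\abs{a_i-a_j}$, take an $\epsilon$-certificate $G$ with scaling factor $T$, and conclude via the chain $i_j(G)<T(a_j+\epsilon)<T(a_k-\epsilon)<i_k(G)$. Your additional remark about the sign of $T$ is a point the paper leaves implicit; it is harmless in context (all scaling factors actually produced in the paper are positive), but your parenthetical justifications for it are a bit loose and would be better replaced by simply observing that $i_\al(G)\ge 1$ together with $\rabs{i_\al(G)/T-a_\al}<\epsilon$ forces $T>0$ once $\epsilon$ is small, or by noting that the definition is only ever invoked with positive $T$.
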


\begin{proof}
Let 
\[
	\epsilon=\frac{1}3 \min\setof{\abs{a_i-a_j}}{i\neq j, i,j\in S}.
\]
Note that $\epsilon>0$. Let $G$ be an $\epsilon$-certificate of $a_{\al}x^{\al}+\ldots+a_1x+a_0$ with scaling factor $T$. Then $G$ is a well-covered graph of independence number $\al$ such that for all $j$, $|\frac{i_j(G)}T-a_j|<\epsilon$.

For $j, k\in S$, if $a_j<a_k$, we have $3\epsilon\le a_k-a_j$.  Thus, $a_j+\epsilon<a_k-\epsilon$, and so 
\[
	i_j(G)<T(a_j+\epsilon)<T(a_k-\epsilon)<i_k(G).\qedhere
\]
\end{proof}

Therefore, if we can any-order the initial coefficients of approximate well-covered
independence polynomials, we can do the same for actual well-covered independence polynomials.

\begin{lem}\label{lem:approxrct}
For any integer $n$ and for any permutation $\pi$ of the set
$\set{\ceil{\al/2},\ceil{\al/2}+1,\ldots,\al}$, there exists an approximate well-covered independence polynomial $a_{\al}x^{\al}+\cdots+a_1x+a_0$ such that for all $\ceil{\frac{\al}2}\le k,l\le \al$, $a_k<a_l$ if and only
if $\pi(k)<\pi(l)$.
\end{lem}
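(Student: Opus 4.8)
The plan is to exhibit, for each target permutation $\pi$ on $S=\set{\ceil{\al/2},\ldots,\al}$, an explicit sequence $(a_1,\ldots,a_\al)$ of non-negative reals satisfying the binomial-ratio chain
\[
\frac{a_1}{\binom{\al}1}\le\frac{a_2}{\binom{\al}2}\le\cdots\le\frac{a_\al}{\binom{\al}{\al}},
\]
so that Theorem~\ref{thm:converse} immediately certifies $a_\al x^\al+\cdots+a_1x$ as an approximate well-covered independence polynomial, and then arrange the free parameters so that the ordering of $\set{a_k}_{k\in S}$ matches the ordering prescribed by $\pi$ and all the $a_k$ for $k\in S$ are distinct. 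Since the conclusion only constrains the \emph{relative order} of the top-half coefficients, there is a lot of slack.

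The cleanest way to produce a sequence obeying the chain is to work with the increments $b_i$ used in the proof of Theorem~\ref{thm:converse}: set $b_1=a_1/\binom{\al}1$ and $b_i=a_i/\binom{\al}i-a_{i-1}/\binom{\al}{i-1}$ for $i>1$, so that the chain condition is exactly $b_i\ge 0$ for all $i$, and $a_i=\binom{\al}i\sum_{j=1}^i b_j$. Thus I would instead \emph{choose} non-negative $b_1,\ldots,b_\al$ freely and read off $a_i$. Write $r_i:=a_i/\binom{\al}i=\sum_{j\le i} b_j$, a non-decreasing sequence; then $a_i=\binom{\al}i r_i$. I would pick the $b_i$ (equivalently the non-decreasing sequence $r_i$) so that the resulting values $a_k=\binom{\al}k r_k$, $k\in S$, are all distinct and sorted according to $\pi$. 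For the first half, $1\le i<\ceil{\al/2}$, I can just take $b_i$ tiny (or even $b_i=0$ for $i\ge 2$, $b_1$ tiny), which forces those $a_i$ to be small and does not interfere with anything; the only real work is on the top half.

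The main obstacle — and it is a mild one — is that the map $r\mapsto a=\binom{\al}k r$ is not the identity, so to make $\set{a_k}_{k\in S}$ come out in a prescribed order I must choose the non-decreasing sequence $\set{r_k}_{k\in S}$ compensating for the binomial weights, while still respecting monotonicity of $r$. Here I would use the following trick: work on a very coarse scale. Pick a huge constant $M$ (say $M\ge \binom{\al}{\lceil \al/2\rceil}+1$, which dominates every binomial coefficient $\binom{\al}k$ with $k\in S$). For $k\in S$ with $\pi$-rank $\rho(k)\in\set{0,1,\ldots,|S|-1}$ (rank $0$ = smallest), set $r_k = M^{\rho(k)}$ — but patched to be non-decreasing in $k$. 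Concretely, define $r_k$ for $k\in S$ by reverse induction from $k=\al$ downward, or simply take $r_k=\max\setof{M^{\rho(j)}}{j\in S,\ j\ge k}$, which is automatically non-decreasing in $k$ and equals $M^{\rho(k)}$ whenever $\rho(k)$ is the maximum of $\set{\rho(j):j\ge k}$; to avoid the patching entirely one observes that it suffices to assign the $r$-values greedily so that $r_k$ is huge-times-bigger than the next-lower one in $\pi$-order among indices that are forced to be $\le$ it. The point is that with $M$ that large, $a_k=\binom{\al}k r_k$ is dominated by the $M$-power, so $a_k<a_l\iff r_k<r_l\iff \pi(k)<\pi(l)$ for $k,l\in S$; distinctness follows because distinct $\pi$-ranks give $r$-values differing by a factor $\ge M$. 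Then extend to the bottom half by $r_i:=\min\setof{r_k}{k\in S}\cdot 2^{-(\ceil{\al/2}-i)}$ (or any scheme making $r_1\le\cdots\le r_{\ceil{\al/2}-1}\le r_{\ceil{\al/2}}$), giving a globally non-decreasing $(r_i)$, hence non-negative increments $b_i$, hence a valid chain; set $a_i=\binom{\al}i r_i$ and $a_0=0$. By Theorem~\ref{thm:converse} this polynomial is an approximate well-covered independence polynomial, and by construction $a_k<a_l\iff\pi(k)<\pi(l)$ for all $k,l\in S$, which is exactly the statement. Combining this lemma with Lemma~\ref{lem:approxwell} then yields Theorem~\ref{thm:rct}.
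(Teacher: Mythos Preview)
Your framework is correct: choose a non-decreasing sequence $(r_k)_{k=1}^{\al}$, set $a_k=\binom{\al}{k}r_k$, and invoke Theorem~\ref{thm:converse}. The specific construction of the $r_k$ on the top half, however, does not work. First a sign slip: $r_k=\max\{M^{\rho(j)}:j\in S,\ j\ge k\}$ is non\emph{increasing} in $k$, not non-decreasing (as $k$ grows the index set shrinks, so the maximum can only drop). Switching to $j\le k$ does give a non-decreasing sequence, but then several $r_k$ collapse to a common value and the intended order of the $a_k$ is lost. More fundamentally, the ``huge $M$'' instinct points the wrong way: if consecutive $r$-values on $S$ are either equal or separated by a factor exceeding every $\binom{\al}{k}$, then for $k<l$ in $S$ one gets $a_k<a_l$ whenever $r_k<r_l$ and $a_k>a_l$ whenever $r_k=r_l$ (since $\binom{\al}{k}>\binom{\al}{l}$ in this range). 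Thus the order of the $a_k$ is determined entirely by the pattern of strict jumps in $r$, giving only $2^{|S|-1}$ possibilities, fewer than $|S|!$ once $|S|\ge 3$. Concretely, for $\al=4$ and the target $a_3<a_4<a_2$, none of the four jump patterns yields this order.

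The paper does the opposite: it makes the $a_k$ on $S$ nearly equal rather than the $r_k$ widely spaced. Setting $a_k=2^{\al}+\pi(k)$ forces the order of the $a_k$ to match $\pi$ exactly, while the ratio $a_k/a_{k+1}\le (2^{\al}+\al)/2^{\al}\le 1+2/\al$ is small enough that the decay $\binom{\al}{k}/\binom{\al}{k+1}=(k+1)/(\al-k)\ge 1+2/\al$ keeps $r_k=a_k/\binom{\al}{k}$ non-decreasing. Your reduction is fine; replace ``separate by huge factors'' with ``cluster tightly and perturb.''
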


\begin{proof}
Define the sequence $(a_1,a_2,\ldots,a_{\al})$ as follows.
\[
	a_i=\begin{cases}
		\binom{\al}i & \text{if $1\le i<\ceil{\frac{\al}2}$},\\
		2^{\al}+\pi(i) & \text{if $\ceil{\frac{\al}2}\leq i\leq \al$}.
	\end{cases}
\]
Then $\frac{a_i}{\binom{\al}i}=1$ for $1\le i<\ceil{\frac{\al}2}$, while $\frac{a_i}{\binom{\al}i}>1$ for $\ceil{\frac{\al}2}\le i$. Further, for $\ceil{\frac{\al}2}\le i<\al$, 
\[
	\frac{a_i}{a_{i+1}}\le\frac{2^{\al}+\al}{2^{\al}}\le1+\frac{2}{\al},
\] 
while 
\[
	\frac{\binom{\al}i}{\binom{\al}{i+1}}=\frac{i+1}{\al-i}\ge\frac{\frac{\al}2+1}{\frac{\al}2}=1+\frac{2}{\al}.
\]
It follows that
\[\frac{a_1}{\binom{\al}{1}}\le\frac{a_2}{\binom{\al}{2}}\le\ldots\le\frac{a_{\al}}{\binom{\al}{\al}}.\]  
Therefore, by Theorem~\ref{thm:converse}, $a_{\al}x^{\al}+\ldots+a_1x$ is an approximate well-covered independence polynomial.  Furthermore, for $\ceil{\frac{\al}2}\le k,l\le \al$, $a_k=2^{\al}+\pi(k)<a_l=2^{\al}+\pi(l)$ if and only if
$\pi(k)<\pi(l)$.
\end{proof}

Our main theorem, Theorem~\ref{thm:rct}, follows.

\begin{proof}[Proof of Theorem~\ref{thm:rct}]
	The statement follows from applying Lemma~\ref{lem:approxrct} and then Lemma~\ref{lem:approxwell} with $S=\set{\ceil{\al/2},\ceil{\al/2}+1,\ldots,\al}$.
\end{proof}

% section (end)

\section{Conclusion}\label{S:conclusion}

Many interesting questions about the independence sequence of graphs are still open.  It was conjectured by Levit and Mandrescu \cite{LM} that every K\"onig-Egerv\'ary graph (a graph $G$ with $\alpha(G)+\nu(G)=n(G)$, where $\nu(G)$ is the size of the largest matching in $G$ and $n(G)$ is the number of vertices in $G$) has a unimodal independence sequence.  This conjecture was recently disproved by Bhattacharyya and Kahn \cite{BK}, who provided a bipartite graph with non-unimodal independence sequence (since every bipartite graph is a K\"onig-Egerv\'ary graph).  However, the following conjecture of Alavi et al. is still open.

\begin{conj}[Alavi, Erd\H{o}s, Madle, Schwenk \cite{AEMS}]
	Every tree and forest has unimodal independence sequence.
\end{conj}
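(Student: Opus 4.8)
This conjecture is open; what follows is a strategy rather than a proof. The plan is to argue by induction on the number of vertices using the standard deletion recursion. If $F$ is a forest with at least one edge, it has a support vertex $u$ (a vertex adjacent to a leaf), and then $F-u$ and $F-N[u]$ are forests with strictly fewer vertices, with
\[
	I(F;x)=I(F-u;x)+x\,I(F-N[u];x).
\]
The base cases---edgeless forests ($I(F;x)=(1+x)^{n}$), stars ($I(K_{1,n};x)=(1+x)^{n}+x$), and paths (whose independence polynomials are real-rooted, hence unimodal)---are immediate. The obstruction is that unimodality is not closed under $(f,g)\mapsto f+xg$: adding two unimodal sequences whose peaks are far apart can create a valley. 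So the first task is to isolate a strengthened inductive invariant $\mathcal{Q}$ on polynomials that (i) every forest independence polynomial has, (ii) implies unimodality, and (iii) is preserved, at least up to a controlled error, by $f+xg$.

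I would first rule out the obvious candidates for $\mathcal{Q}$, and then commit to one. Real-rootedness fails already for the claw $K_{1,3}$, with $I(K_{1,3};x)=1+4x+3x^{2}+x^{3}$ having a unique real root; so a purely real-rooted induction is impossible. Hurwitz stability (all roots in the open left half-plane) is still consistent with that example---one checks $3\cdot 4>1\cdot 1$ by Routh--Hurwitz---and stable polynomials enjoy a rich closure calculus, but $f+xg$ does not respect stability in general, so any such argument would have to exploit the tree structure rather than formal closure. Plain log-concavity of the coefficient sequence is also too fragile: it is not additive, and is unlikely to hold for every forest.

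The invariant I would actually develop is quantitative \emph{mode control}. The aim is to prove, simultaneously with unimodality, that the location $M(F)$ of the peak of $(i_0(F),i_1(F),\dots)$ lies within an absolute constant of an explicit functional $\mu(F)$ of the forest (a natural first guess being something comparable to $\tfrac12\alpha(F)$ corrected by the number of support vertices), together with a near-flatness bound controlling the ratios $i_{k}(F)/i_{k+1}(F)$ for $k$ in a window around $M(F)$. If the two summands $I(F-u;x)$ and $x\,I(F-N[u];x)$ have peaks within $O(1)$ of each other---which would follow from the bound on $\mu$---then the flatness estimate forces the sum to stay unimodal, and the remaining work is to propagate the bounds on $M$ and on the ratios through the recursion. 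Pinning down the correct $\mu(F)$ and the self-improving form of the flatness estimate is the crux of the whole program.

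The principal obstacle, and the reason the conjecture remains unproved, is exactly requirement (iii): no property of integer sequences is currently known that is shared by all forest independence sequences, forces unimodality, and transforms predictably under $f\mapsto f+xg$. Pending such an insight, a realistic intermediate milestone is to settle the conjecture for trees of bounded maximum degree, or bounded ``caterpillar distance'': along a path decomposition the independence polynomial satisfies a linear recursion with matrix coefficients, and one can try to show that the associated transfer operator maps a suitable cone of unimodal (or log-concave) sequences into itself, yielding unimodality uniformly in the length. Passing from bounded-width trees to arbitrary trees is where genuinely new ideas would be needed.
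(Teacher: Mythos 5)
There is no proof to compare against: the paper does not prove this statement. It quotes the Alavi--Erd\H{o}s--Malde--Schwenk conjecture in its concluding section precisely as an open problem (the paper's own contribution is the Roller Coaster Conjecture, proved by a quite different construction involving the graphs $H_{k,\al;m}$ and approximate well-covered independence polynomials). Your proposal correctly recognizes this and is explicitly a research plan rather than a proof, so as a proof of the stated conjecture it has a gap by its own admission: the strengthened invariant $\mathcal{Q}$ satisfying your requirements (i)--(iii) is never produced, and without it the induction via $I(F;x)=I(F-u;x)+x\,I(F-N[u];x)$ does not close, for exactly the reason you identify --- unimodality is not preserved under $(f,g)\mapsto f+xg$ when the peaks of the summands drift apart.

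That said, your framing of the difficulty is accurate and your preliminary observations are sound: $I(K_{1,3};x)=1+4x+3x^{2}+x^{3}$ indeed has only one real root, so a real-rootedness induction is dead on arrival (real-rootedness is available only in the claw-free setting, which excludes most trees), and the peak-location/flatness program you sketch is a reasonable articulation of what a successful inductive invariant would have to control. But none of this constitutes progress that the paper could be checked against; if this text were to be spliced in as a proof of the conjecture, it would be wrong to present it as such, since the statement remains open both in the paper and in your write-up.
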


We also believe that graphs satisfying property $P(k,\al;m)$ may be of independent interest.  Often the question for such structures is how small can such an object be?  To be precise, our question is as follows.

\begin{question}
	Given integers $k$, $\al$, and $m$ with $0\leq k<\al$ and $m\geq 1$, what is the minimum number of vertices in a graph $G$ with property $P(k,\al;m)$?
\end{question}

The graph $H_{k,\al;m}$ has $\al m^{\binom{\al-1}{k}}$ vertices which we suspect is far from the minimum.  Recall that, for integers $k$ and $n$ with $1\leq k\leq n$, the Kneser graph $KG_{n,k}$ is the graph with vertex set $\binom{[n]}k$, where two vertices are adjacent if and only if they are disjoint.  One can check that $KG_{\al(\al-2),\al-2}$ satisfies property $P(\al-2,\al;\frac{1}2\binom{2(\al-2)}{\al-2})$.  Further, we have
\[
	n(H_{\al-2,\al;m})=\al m^{\al-1}\gg \binom{\al(\al-2)}{\al-2}=n(KG_{\al(\al-2),\al-2})
\]
when $m=\frac{1}2\binom{2(q-2)}{q-2}$.

\bibliographystyle{amsplain}
\bibliography{rollercoaster}

\end{document}